\documentclass[11pt]{amsart}

\usepackage[pdftex]{graphicx} 
\usepackage[usenames,dvipsnames]{xcolor}

\usepackage[english]{babel} 
\usepackage[utf8]{inputenc}

\usepackage[a4paper,margin=2.5cm]{geometry}
\usepackage{upgreek}
\usepackage{amsfonts}
\usepackage{amsmath}
\usepackage{amssymb}
\usepackage{amsthm}
\usepackage{bbm}
\usepackage{paralist}
\usepackage[colorlinks,citecolor=magenta,pagebackref=true,urlcolor=magenta,
pdftex]{hyperref}

\usepackage[T1]{fontenc}

\usepackage{nicefrac}
\usepackage{microtype}
\usepackage{mathrsfs}

\parindent=0pt
\parskip=5pt

\theoremstyle{plain}
\newtheorem{thm}{Theorem}
\newtheorem*{thm*}{Theorem}

\newtheorem{prop}[thm]{Proposition}
\newtheorem{lem}[thm]{Lemma}
\newtheorem{cor}[thm]{Corollary}

\newtheorem*{prb*}{Problem}

\theoremstyle{definition}
\newtheorem{rem}[thm]{Remark}

\theoremstyle{remark}

\newcommand\Defn[1]{\textbf{{#1}}}

\newcommand\mc[1]{\mathcal{#1}}
\newcommand\scr[1]{\mathscr{#1}}

\newcommand\mr[1]{\mathrm{#1}}

\newcommand\mbf[1]{\mathbf{#1}}
\newcommand{\R}{\mathbb{R}}

\newcommand{\Arr}{\scr{A}}
\newcommand\Zono{\mc{Z}}
\newcommand{\rk}{\mr{rk}}

\newcommand\IP[1]{\mathrm{L}_{#1}}

\newcommand\eps{\varepsilon}
\renewcommand\l{\lambda}

\newcommand\Wills{\mathrm{W}}%

\newcommand\codim{\mathrm{codim}}
\newcommand\vol{\mathrm{vol}}

\title[Whitney numbers via measure concentration of intrinsic volumes]%
{Whitney numbers of arrangements via measure concentration of intrinsic
volumes}
\author{Karim A.~Adiprasito}
\address{Einstein Institute for Mathematics, Hebrew University of Jerusalem, Jerusalem, Israel}
\email{adiprssito@math.fu-berlin.de}
\author{Raman Sanyal}
\address{Fachbereich Mathematik und Informatik, %
Freie Universit\"at Berlin, Berlin, %
Germany}
\email{sanyal@math.fu-berlin.de}

\date{\today}

\keywords{matroids, $c$-arrangements, configuration varieties, 
L\'evy-Milman concentration of measure, Rota--Heron--Welsh conjecture, 
mixed volumes, Steiner polynomials}
\subjclass[2010]{52B40, 14N20, 52A39, 46B20, 60F20}

\begin{document}

\begin{abstract}
We verify the Rota--Heron--Welsh conjecture for matroids realizable as
$c$-arrangements: the coefficients of the characteristic polynomial of the
associated matroid are log-concave. This family of matroids strictly contains
that of complex hyperplane arrangements.  Our proof combines the study of
intrinsic volumes of certain extensions of arrangements and the L\'evy--Milman
measure concentration phenomenon on realization spaces of arrangements.
\end{abstract}
\maketitle

\newcommand\Char{\psi}

In generalization of Birkhoff's chromatic polynomial of a
graph~\cite{Birkhoff}, one defines for a matroid $M$ of rank $r$ the
\Defn{characteristic polynomial} 
\begin{equation}
    \chi(M;\lambda) \  := \ \sum_{x\in \IP{M}} \mu(x)\,
    \lambda^{\rk(M)-\rk(x)}
    \ = \ \gamma_0(M) \lambda^r - \gamma_1(M) \lambda^{r-1} + \cdots + (-1)^r
    \gamma_r(M),
\end{equation}
where $\IP{M}$ is the intersection poset or lattice of flats of $M$ with
M\"obius function $\mu(x) = \mu_{\IP{M}}(\widehat{0},x)$ and rank function
$\rk(\cdot)$.  The coefficients $\gamma_{i}(M)$ ---the (unsigned)
\Defn{Whitney numbers of the first kind}--- carry a variety of combinatorial
information of $M$ and have been subject to extensive study; see, for example,
Chapters 7 and 8 of~\cite{white}.  The coefficients $\gamma_i$ coincide with
the Betti numbers of the Orlik-Solomon algebra associated to $M$, and they are
closely related to Milnor numbers and Chern--Schwartz--MacPherson classes of
complements of complex hyperplane arrangements. This paper is devoted to the
following property of characteristic polynomials of matroids:

\Defn{Rota--Heron--Welsh conjecture.} For any matroid $M$, the coefficients of
the characteristic polynomial $\chi(M;\lambda)$ are \Defn{log-concave}, that is,
\[
    \gamma_{i-1}(M) \cdot \gamma_{i+1}(M) \ \le \ \gamma_i(M)^2\quad 
\] 
for all $1 \le i \le n-1$.

By Rota's sign theorem~\cite{Rota}, $\gamma_i(M) > 0$ for all $i$ and hence the
conjecture implies that the sequence of Whitney numbers is unimodal, i.e., 
\[
    \gamma_0 \ \le \  \gamma_1 \  \le \  \cdots  \ \le \  \gamma_{i-1} \ \le \
    \gamma_i \ \ge \ 
    \gamma_{i+1} \  \ge \  \cdots \ \ge \  \gamma_{n-1} \  \ge \  \gamma_n
\]
for some $0 \le i \le n$. Following Aigner~\cite{Aigner}, we define the
\Defn{absolute characteristic polynomial} of $M$
\begin{equation}\label{eqn:achar}
    \Char(M;\lambda)  \ = \ \gamma_0(M) \lambda^r + \gamma_1(M) \lambda^{r-1} + \cdots + \gamma_r(M).
\end{equation}

Spectacular progress towards a resolution of the conjecture has been achieved
by Huh~\cite{Huh} for matroids that can be realized over a field of
characteristic $0$ and in full generality by Adiprasito--Huh--Katz~\cite{AHK}.
The proof in~\cite{AHK} is set in algebraic geometry and the aim of this note
is to prove the following weaker result by appealing to methods from convex
geometry. 

\begin{thm}\label{thm:log_conc2}
    If $M$ is a matroid realizable by a $c$-arrangement, then the sequence of
    Whitney numbers $\gamma_0(M),\gamma_1(M),\dots,\gamma_n(M)$ is
    log-concave.
\end{thm}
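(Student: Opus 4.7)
The plan is to realize the (appropriately normalized) Whitney numbers $\gamma_i(M)$ as mixed volumes $V(Z_{\mc{A}}[i], B[r-i])$, where $Z_{\mc{A}}$ is a convex body built from a realization $\mc{A}$ of $M$ as a $c$-arrangement, $B$ is the unit Euclidean ball in the ambient space, and the bracket notation denotes repetition in Minkowski's mixed-volume functional. Once such an identification is secured, the Alexandrov--Fenchel inequality yields
\[
    V(Z_{\mc{A}}[i], B[r-i])^2 \ \geq \ V(Z_{\mc{A}}[i-1], B[r-i+1]) \cdot V(Z_{\mc{A}}[i+1], B[r-i-1]),
\]
which, after tracking the combinatorial normalizations, translates to the log-concavity $\gamma_{i-1}(M)\, \gamma_{i+1}(M) \le \gamma_i(M)^2$.

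The geometric construction proceeds as follows. To each codimension-$c$ subspace $L \in \mc{A}$ I associate a $c$-dimensional gadget $g_L$ supported on $L^\perp$ (say a small Euclidean ball or parallelepiped), and set $Z_{\mc{A}} := \sum_{L \in \mc{A}} g_L$. Expanding the Steiner polynomial $\vol(t\, Z_{\mc{A}} + B)$ and identifying its coefficients through the McMullen formula for intrinsic volumes of Minkowski sums produces a sum indexed by subfamilies of $\mc{A}$, weighted by volumes of parallelepipeds spanned by their perpendicular directions. The defining property of a $c$-arrangement---that $k$ independent subspaces intersect in codimension $ck$---permits one to regroup these contributions by their rank in the intersection lattice $\IP{M}$, and a M\"obius inversion should then convert the resulting expression into the absolute characteristic polynomial $\Char(M; \l)$ of~\eqref{eqn:achar} up to a computable constant.

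The main difficulty is that this identification is only exact for sufficiently generic realizations, and for $c > 1$ genericity is subtle because the realization space of $c$-arrangements is typically disconnected and higher-dimensional, so a naive McMullen expansion on a single realization will overcount. This is precisely where L\'evy--Milman concentration enters: on the orthogonally invariant configuration variety of $c$-arrangements realizing $M$, the mixed volumes $V(Z_{\mc{A}}[i], B[r-i])$ concentrate sharply around their mean as the ambient dimension grows, and the mean can be evaluated combinatorially to be proportional to $\gamma_i(M)$. Since Alexandrov--Fenchel holds pointwise on every realization, it descends to the concentrated expectation, and the log-concavity of the sequence $(\gamma_i(M))$ follows. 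The hardest step to execute rigorously will be the geometric-to-combinatorial identification in the previous paragraph: aligning the McMullen expansion of $Z_{\mc{A}}$ with M\"obius inversion on $\IP{M}$ in a manner that is both quantitatively precise and robust enough to survive the concentration-of-measure averaging step.
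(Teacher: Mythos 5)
There is a genuine gap, and it sits exactly where you locate the ``hardest step'': the identification of (normalized) mixed volumes of a discotope built from a single realization of $M$ --- or of their average over the configuration variety of realizations of $M$ --- with the Whitney numbers $\gamma_i(M)$. This identification is not merely delicate; it fails. There are two separate obstructions. First, concentration over realizations of the \emph{same} matroid in growing ambient dimension does not make the generalized normals nearly orthogonal, because the dependencies of $M$ confine some normals to low-dimensional subspaces spanned by the others; the paper's $U_{2,3}$ illustration (two random unit vectors $x,y$ in $S^d$ and a third random unit vector $z$ in their common span) shows that $z$ stays far from orthogonal to $x$ and $y$ for every $d$, so the Wills polynomial neither concentrates nor has a combinatorially meaningful mean over that configuration variety. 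Second, even in the near-orthogonal regime the Steiner/McMullen expansion of $\mc{Z}(\Arr)$ groups contributions by belts, i.e.\ by flats, and the weight attached to a flat is (asymptotically) its number of bases, not the M\"obius value $|\mu(\hat 0,L)|$; no M\"obius inversion on $\IP{M}$ converts one into the other at the level of a fixed convex body. This is precisely why the paper remarks that hoping for $\Wills(K;\lambda)=\Char(M;\lambda)$ ``is likely to fail'' and that general zonotopes do not satisfy the appropriate deletion-contraction recurrence.

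The paper's actual route repairs both defects by changing the arrangement before measuring it: a trivial extension pads the ambient dimension so that the L\'evy--Milman concentration of Proposition~\ref{prp:levy} genuinely applies (with Lipschitz constants controlled relative to $h$), a large product extension adds $h$ parallel generic hyperplanes in each of $k$ new directions, and a semiflexible extension replaces each original element by a perturbed copy that is free inside the span of the added directions --- this is what restores near-orthogonality for the dependent elements. The price is that one no longer computes $\Char(\Arr;\lambda)$ directly: it is recovered only as the limit $h\to\infty$ of $h^{-k}$ times the characteristic polynomial of the extension (Lemmas~\ref{lem:char_large_prod} and~\ref{lem:flexible_extensions}, Corollary~\ref{cor:flex_limit}), and correspondingly the a.a.s.\ statement (Theorem~\ref{thm:measure1}) is a normalized deletion-contraction recursion for Wills coefficients, fed into an induction on the number of elements (Theorem~\ref{thm:main}), rather than a one-shot evaluation of an expectation. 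Your proposal has the right ingredients in the right order (discotope, concentration, Alexandrov--Fenchel), and the final step from the identification to log-concavity is sound; but without the extension machinery the claim that ``the mean can be evaluated combinatorially to be proportional to $\gamma_i(M)$'' has no proof and, as stated, no true statement behind it.
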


Here, a $c$-arrangement is a collection of codimension-$c$ linear subspaces of
$\R^d$ all whose non-empty intersections have codimension divisible by $c$
\cite[Part III]{GM-SMT}. It is easy to check that (rank functions of)
$c$-arrangements give matroids; see Section~\ref{sec:basics}.  For $c=2$,
$c$-arrangements include complex hyperplane arrangements but are strictly more
general (see \cite{Z-DRC}). In particular, there are matroids not realizable
over any field that can be realized as
$c$-arrangements~\cite[Sec.~III.5.2]{GM-SMT}. In this sense,
Theorem~\ref{thm:log_conc2} is not a complete resolution of HRW-conjecture.
For example, it is known that the V\'amos matroid~\cite[Example~2.1.22]{oxley}
does not satisfy Ingleton's inequality \cite{Ingleton} and is therefore not
realizable as a $c$-arrangement \cite{Bjss}.  For some related development,
compare also \cite{A}, where the Lefschetz hyperplane theorem is extended from
the complex-algebraic case to $c$-arrangements.

Whereas Huh's proof is set in algebraic and tropical geometry (see
also~\cite{katz-huh}), our proof is in the realm of classical convex geometry.
The key idea follows a recent geometric approach to the MacPherson conjecture
\cite{A2}:  The main result gives a geometric representation of
the Whitney numbers of a  $c$-arrangement $\Arr$ as the intrinsic volumes of a
high-dimensional convex body. The log-concavity then simply follows from the
Alexandrov--Fenchel inequalities.  To establish this, we first describe what
we call an \emph{extension} of an arrangement (Section~\ref{sec:exts}). This
yields a sequence of probability spaces of arrangements. We prove that the
associated convex bodies (zonotopes for hyperplane arrangements, discotopes
for $c$-arrangements) have a Wills polynomial resembling the characteristic
polynomial of $\Arr$ asymptotically almost surely (a.a.s.) using the most
basic form of L\'evy--Milman measure concentration. The curiosity of this
proof is underscored by the fact that arrangements in general have complicated
realization spaces (see Remark~\ref{Mnev}) but a geometry that nevertheless
allows for a probabilistic treatment.

\textbf{Acknowledgements.} We would like to thank the Miller Institute at UC
Berkeley where this research was initiated. These results where first
presented at the Oberwolfach Workshop \emph{Geometric and Algebraic
Combinatorics} in February 2015 and we thank the participants for helpful
comments. K.~Adiprasito was supported by an EPDI/IPDE postdoctoral fellowship
and a Minerva fellowship of the Max Planck Society, and by the Romanian NASR,
CNCS---UEFISCDI, project PN-II-ID-PCE-2011-3-0533.  R.~Sanyal was supported by
the DFG-Collaborative Research Center, TRR 109 ``Discretization in Geometry
and Dynamics''.

\section{Convex geometry of $c$-arrangements}\label{sec:basics}

In this paper, we focus on matroids that can be realized by some central
$c$-arrangement in $\R^d$. For a general arrangement $\Arr$ of subspaces in
$\R^d$, we write $\IP{\Arr}$ for the intersection poset, that is, the
nonempty intersections of elements in $\Arr$ ordered by reverse inclusion. The
minimum is thus $\R^d$ and if $\Arr$ is central, then the maximum is
$\hat{1} = \bigcap_{H \in \Arr} H$. A central arrangement is \Defn{essential}
if $\hat{1} = \{0\}$.
In analogy to hyperplane arrangements, we define the 
\Defn{absolute characteristic polynomial} of a subspace arrangement $\Arr$ as
\[
    \Char(\Arr; \lambda) \ := \ \sum_{x \in \IP{\Arr}} \mu(x)
    (-1)^{d-\dim(x)}\lambda^{\dim(x)}
\]
where $\mu(x) = \mu(\hat{0},x)$ is the M\"obius function of $\IP{\Arr}$;
see~\cite[Sect.~4.4]{Bjss}.  The arrangement $\Arr$ is a \Defn{$\boldsymbol
c$-arrangement} if all subspaces are of codimension $c$ and $\codim(x)$ is
divisible by $c$ for all $x \in \IP{\Arr}$.  
It was first noted in~\cite{GM-SMT} that for a central $c$-arrangement,
$x \mapsto \frac{1}{c}\codim(x)$ is the rank function of a matroid $M(\Arr)$.
    
For an element $H \in \Arr$, we define the \Defn{deletion} 
and the \Defn{contraction}
\[
    \Arr \backslash H \ := \ \{ H'  \in \Arr : H' \not\subseteq H\}
    \qquad \text{and} \qquad
    \Arr / H \ := \ \{ H' \cap H : H' \in \Arr \backslash H\}.
\]
The absolute characteristic polynomial satisfies the deletion-contraction
identity
\begin{equation}\label{eqn:char_del_contr}
    \Char(\Arr; \lambda) \ = \ \Char(\Arr \backslash H;\lambda) \ + \ \Char(\Arr /
    H;\lambda).
\end{equation}
If $\Arr$ is a central $c$-arrangement in $\R^d$ realizing a matroid $M$ of
rank $r$, then $\Char(\Arr;\lambda)= \lambda^{d-r}\Char(M_\Arr;\lambda^{c})$.
We refer the reader to Stanley's lecture notes on hyperplane
arrangements~\cite{StanleyHyp} (see also~\cite{StanleyEC1new}) and Bj\"orner's
excellent treatment of subspace arrangements~\cite{Bjss}.

\subsection{Zonotopes and discotopes} 
\label{ssec:zono} %

We denote by $\kappa_d = \vol_d(B^d) = \frac{\pi^{n/2}}{\Gamma(n/2+1)}$ the
volume of the unit $d$-ball. For an affine  subspace $H \subset \R^d$ of
dimension $k$, let us write $H^\perp$ for $(d-k)$-dimensional linear subspace
orthogonal to $-p + H $ for $p \in H$.  We write $\mbf{n}_H :=
\kappa_{d-k}^{-(d-k)}B^d \cap H^\perp$ to denote the ball in $H^\perp$ of
volume $1$.  To a subspace arrangement $\Arr=\{H_1, H_2, \dots, H_n\}$ in
$\R^d$ we associate the convex body
\[
    \mc{Z}(\Arr) \ := \ \mbf{n}_1 + \mbf{n}_2 + \cdots + \mbf{n}_n.
\]
If $\Arr$ is a hyperplane arrangement, then $\mc{Z}(\Arr)$ is the zonotope
corresponding to the unit normals to the hyperplanes in $\Arr$. For subspace
arrangements this is more general and $\mc{Z}(\Arr)$ is called the
\Defn{discotope} of $\Arr$. In analogy, we call $\mbf{n}_H$ the
\Defn{generalized (unit) normal} of $H$ or the \Defn{$k$-normal} if we want to
emphasize the dimension $k = \dim H$.

\subsection{Wills polynomials and normal cones}
Let $K$ denote any closed $r$-dimensional convex body in $\R^d$ and let $B^d$
be the unit ball.  Steiner's formula asserts that the volume of the Minkowski
sum of $K$ and the dilated ball $\lambda B^d$ is given by
\begin{equation}\label{SteinerP}
    \vol_d(K+\lambda B^d) \ = \ 
    \nu_d(K) \kappa_{0} +
    \nu_{d-1}(K) \kappa_{1} \lambda^{1} + 
    \cdots +
    \nu_0(K) \kappa_{d} \lambda^{d}.
\end{equation}
This is called the \Defn{Steiner polynomial} of $K$.  The coefficients
$\nu_i(K)$, called the \Defn{intrinsic volumes} of $K$, will be of great
importance to us. For a polytope $P \subset \R^d$, they have a simple
interpretation: For a face $F \subseteq P$ of dimension $k$, let
\[
    N_F(P) \ := \ \{ \omega \in \R^d : \omega^tx \le \omega^ty, x \in P, y
    \in F\}
\]
be the \Defn{normal cone} of $F$ at $P$ and define the \Defn{external angle} of
$F$ at $P$ as 
\[
    \alpha_F (P) \ := \ \frac{\vol_{d-k}(N_F(P) \cap
    B^d)}{\kappa_{d-k}}.
\]
The intrinsic volumes of $P$ can now be expressed as
\begin{equation}\label{SteinerL}
    \nu_i(P) \ = \ \sum_{F\ i\text{-face of}\ P}
    \alpha_F(P)\cdot\vol_i(F).
\end{equation}

A central result concerning the coefficients of Steiner polynomials is the following
consequence of the \mbox{Alexandrov--Fenchel} inequalities (cf.\ \cite{Schneider93}).
\begin{thm}\label{AF}
    The coefficients $(\nu_i(K) \kappa_{d-i})_{i=0,\dots,d}$ of the
    $d$-dimensional Steiner polynomial of a $r$-dimensional convex body, $r\le
    d$, form a log-concave sequence.
\end{thm}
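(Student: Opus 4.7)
The plan is to identify the Steiner coefficients with mixed volumes and then invoke the Alexandrov--Fenchel inequalities. Expanding $\vol_d(K+\l B^d)$ by the multilinearity of mixed volumes gives
\[
    \vol_d(K+\l B^d) \ = \ \sum_{i=0}^d \binom{d}{i}\, V(K[i], B^d[d-i])\, \l^{d-i},
\]
so comparing coefficients with the Steiner polynomial \eqref{SteinerP} identifies
\[
    \nu_i(K)\,\kappa_{d-i} \ = \ \binom{d}{i}\, V(K[i], B^d[d-i])
\]
for every $0\le i\le d$. This is where I would begin.

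With this identification, log-concavity of $b_i := \nu_i(K)\kappa_{d-i}$ splits into two log-concave factors. The Alexandrov--Fenchel inequality, applied to the $d$-tuple obtained from $K,B^d$ and $K[i-1], B^d[d-i-1]$, yields
\[
    V(K[i], B^d[d-i])^2 \ \ge \ V(K[i-1], B^d[d-i+1])\cdot V(K[i+1], B^d[d-i-1])
\]
for $1\le i\le d-1$. Separately, the binomial coefficients are themselves log-concave, $\binom{d}{i}^2 \ge \binom{d}{i-1}\binom{d}{i+1}$. Multiplying these two inequalities yields $b_i^2 \ge b_{i-1} b_{i+1}$, exactly as required.

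The only subtlety is the degenerate case $r<d$. For $i>r$ one has $\nu_i(K)=0$ and hence $b_i=0$, so log-concavity at such indices is trivial; the only boundary instance to check is $b_{r-1}\cdot 0 \le b_r^2$. For $1\le i\le r-1$ each mixed volume on the right-hand side of Alexandrov--Fenchel contains at least one copy of $B^d$ and $K$ has non-empty interior in its affine span, so all three mixed volumes are strictly positive and the inequality applies verbatim (alternatively, one may approximate $K$ by the full-dimensional bodies $K+\eps B^d$ and pass to the limit). I expect no genuine obstacle beyond quoting the Alexandrov--Fenchel inequality itself, which I would take as a black box from Schneider's treatise; all the work is in that citation.
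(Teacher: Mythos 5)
Your proof is correct and is essentially the argument the paper appeals to: the paper states Theorem~\ref{AF} as a direct consequence of the Alexandrov--Fenchel inequalities with a citation to Schneider and gives no further derivation. Your identification $\nu_i(K)\,\kappa_{d-i}=\binom{d}{i}V(K[i],B^d[d-i])$, combined termwise with the log-concavity of the binomial coefficients, is precisely how that citation unwinds, and the lower-dimensional case is indeed harmless since Alexandrov--Fenchel holds for arbitrary convex bodies (or by your approximation argument).
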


It is clear that the Steiner polynomial makes reference to the ambient space
whereas the intrinsic volumes do not. This leads to the so-called \Defn{Wills
polynomial}~\cite{wills,hadwiger}: For a $d$-dimensional convex body $K$ we
define
\begin{equation}
    \Wills(K;\lambda) \ := \ %
    \nu_d(K) +
    \nu_{d-1}(K)  \lambda^{1} + 
    \cdots +
    \nu_0(K)  \lambda^{d}.
\end{equation}

For a zonotope, the Wills polynomial carries quite some combinatorial
information: Let $Z = \sum_{i=1}^n [-z_i,z_i]$ be a zonotope.
The $k$-faces of a zonotope $Z$ can be grouped in \emph{belts}.
Any two $k$-faces $F_1,F_2 \subset Z$ in the same belt are translates and
hence $\vol_k(F_1) = \vol_k(F_2)$. The belts of $Z$ are in bijection with the
flats of the corresponding hyperplane arrangement $\Arr$.  Moreover, the sum
of the external angles of all $k$-faces in a belt sums to $1$ and therefore
\begin{equation}\label{eqn:wills_zono}
\Wills(Z;\lambda) \ =\ \sum_{L \in \mc{L}(\Arr)}
    \vol_{\dim F_L}(F)\, \lambda^{d-\dim F_L}
\end{equation}
where $F_L$ is a representative of a face of the belt corresponding to $L$.
Let $[-z_i,z_i]$ be a generating segment of $Z$ and denote by $Z\backslash i$
the deletion and by $Z/i$ the contraction (i.e., projection onto $z_i^\perp$),
then
\begin{equation}\label{eqn:wills_del_contract}
    \Wills(Z;\lambda) \ = \ \Wills(Z\backslash i;\lambda) \ + \ \|z_i\|
    \Wills(Z / i;\lambda).
\end{equation}

As an example, let $\Arr_d$ be the arrangement of the $d$ coordinate
hyperplanes in $\R^d$. The corresponding zonotope $Z_d$ is a translate of the
unit cube $[0,1]^d$. Hence
\[
    \Wills(Z_d;\lambda) \ = \ (1+\lambda)^d \ = \ \sum_{i=0}^d \binom{d}{i}
    \, \lambda^{d-i}.
\]

Observe that $\Wills(Z_d;\lambda) = \Char(\Arr_d;\lambda)$.  It is natural to
attempt to find a convex body $K$ for every matroid $M$ such that
$\Wills(K;\lambda)\ =\ \Char(M;\lambda)$.  On second thought, this is likely
to fail, since the Wills polynomial encodes geometric information rather than
combinatorial and for general zonotopes do not satisfy the appropriate
deletion-contraction recurrence. Repairing these defects will be the purpose
of this note.

It was shown by McMullen~\cite{McMullenIV} that the intrinsic volumes are also
log-concave. 
\begin{cor}\label{cor:wills_log}
    For a $d$-dimensional convex body $K$, the coefficients of the Wills
    polynomial $\nu_0(K),\dots,\nu_d(K)$ form a log-concave sequence.
\end{cor}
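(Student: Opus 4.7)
The plan is to factor the intrinsic volume $\nu_j(K)$ as a product of two positive log-concave sequences in $j$. Start from the mixed-volume representation
\[
\nu_j(K) \ = \ \binom{d}{j}\,\frac{V(K[j],\,B^d[d-j])}{\kappa_{d-j}},
\]
obtained by matching the Steiner formula~\eqref{SteinerP} with the multilinear expansion of $\vol_d(K+\lambda B^d)$ in terms of mixed volumes. Writing $V_j := V(K[j],B^d[d-j])$, the classical Alexandrov--Fenchel inequality (the source of Theorem~\ref{AF}), applied with all but two arguments fixed to $B^d$, gives
\[
V_j^2 \ \ge \ V_{j-1}\,V_{j+1},
\]
so the sequence $V_j$ is log-concave in $j$.

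Since products of positive log-concave sequences are log-concave, it remains to show that the prefactor $a_j := \binom{d}{j}/\kappa_{d-j}$ is log-concave in $j$. Unwinding, this amounts to the numeric inequality
\[
\frac{(j+1)(d-j+1)}{j(d-j)} \ \ge \ \frac{\kappa_{d-j}^{\,2}}{\kappa_{d-j+1}\,\kappa_{d-j-1}}.
\]
Using $\kappa_k = \pi^{k/2}/\Gamma(k/2+1)$, the right-hand side simplifies, with $k = d-j$, to $\tfrac{k+1}{2}\bigl(\Gamma(k/2+1/2)/\Gamma(k/2+1)\bigr)^{2}$. Gautschi's estimate $\Gamma(x+1/2)/\Gamma(x+1) \le 1/\sqrt{x}$ bounds this by $(k+1)/k = (d-j+1)/(d-j)$, reducing the required claim to the trivial $(j+1)/j \ge 1$.

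The main obstacle is this second step. Theorem~\ref{AF} by itself only yields log-concavity of the Steiner coefficients $\nu_j(K)\kappa_{d-j}$, and one cannot pass from there to log-concavity of $\nu_j(K)$ by any straightforward multiplicative trick on $\kappa_k$, since the ball volumes themselves form a log-concave (not log-convex) sequence. The Gamma-function estimate is therefore essential: it shows that binomial log-concavity beats ball-volume log-concavity by precisely the amount needed. This is in spirit McMullen's argument~\cite{McMullenIV}.
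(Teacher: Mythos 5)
Your proof is correct, but it takes a genuinely different route from the one in the paper. You stay in the fixed ambient dimension $d$: you expand the Steiner polynomial via mixed volumes, apply Alexandrov--Fenchel directly to $V_j=V(K[j],B^d[d-j])$, and then show the explicit prefactor $\binom{d}{j}/\kappa_{d-j}$ is itself log-concave, which requires the Gautschi--Wendel estimate $\Gamma(x+\tfrac12)/\Gamma(x+1)\le x^{-1/2}$; your computation of $\binom{d}{j}^2\big/\bigl(\tbinom{d}{j-1}\tbinom{d}{j+1}\bigr)=\tfrac{(j+1)(d-j+1)}{j(d-j)}$ and of $\kappa_{d-j}^2/(\kappa_{d-j+1}\kappa_{d-j-1})=\tfrac{d-j+1}{2}\bigl(\Gamma(\tfrac{d-j+1}{2})/\Gamma(\tfrac{d-j}{2}+1)\bigr)^2$ checks out, and your observation that the ball volumes alone work against you (they are log-concave, so $1/\kappa_{d-j}$ is not) correctly identifies why the binomial factor and the Gamma estimate are genuinely needed. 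The paper instead uses McMullen's asymptotic trick: since intrinsic volumes are independent of the ambient space, embed $K$ isometrically in $\R^n$ for all $n\ge d$, apply Theorem~\ref{AF} to get log-concavity of $\nu_i(K)\kappa_{n-i}$ for every $n$, and let $n\to\infty$ with a Stirling normalization so that the $\kappa_{n-i}$ corrections wash out in the limit. The paper's argument is softer (no Gamma inequalities beyond Stirling) and fits its high-dimensional, asymptotic theme, while yours is finite-dimensional, fully explicit and quantitative, at the cost of invoking the mixed-volume identity $\nu_j(K)=\binom{d}{j}V(K[j],B^d[d-j])/\kappa_{d-j}$ and Gautschi's inequality; so your closing attribution ``in spirit McMullen's argument'' does not quite describe the proof reproduced in the paper, which is the limiting one rather than yours.
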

We repeat the proof since it fits perfectly into our setting.
\begin{proof}
    Let $K \subset \R^d$ be a $d$-dimensional convex body. For every $n \ge
    d$, the have an isometric embedding $K \subset \R^n$. Thus, we can
    consider the coefficients of the Steiner polynomials
    $S_n(K;\lambda) = \vol_n(K + \lambda B_n)$ for $n \rightarrow \infty$.
    Since the sequence $\nu_i(K) \kappa_{n-i}$ is log-concave, so is the
    sequence
    \[
        \widetilde{\nu}_{i,n}(K) \:= \ \nu_i(K)\cdot\kappa_{n-i}\cdot
        \pi^{-\frac{n-i}{2}} \sqrt{\pi {n}} (\tfrac{n}{2e})^{\frac{n}{2}}
    \]
    By the first Stirling formula, we infer that $\widetilde{\nu}_{i,n}(K)
    \xrightarrow{\ n \longrightarrow \infty\ }\nu_i(K)$. 
\end{proof}

\subsection{Measure concentration} The philosophy of measure concentration
makes our use of this principle quite clear: If $X$ is a random variable in a
metric probability space depending on sufficiently many, sufficiently
independent variables then $X$ is virtually constant. We argue here that if the
normals generating an arrangement are sufficiently independent, then (the
Wills polynomial of) a random arrangement is essentially independent of the
realization, and hence ``combinatorial''.  We refer to \cite{GM} and
\cite{MS} for the necessary background.

\newcommand\Gr{\mathrm{Gr}}
The underlying principle of measure concentration is geometric, and in this
context goes back to L\'evy and later Milman (cf.\ \cite{GM}), who revealed
the connection to isoperimetric properties.  Ultimately, we shall only need a
very special case of this technology: $S^d$ with the natural angular distance
$\updelta$ and uniform distribution $\upmu$ for hyperplane arrangements, and,
more generally, the Grassmannians $\Gr_{r,d}$ with the uniform measure $\upmu$
and metric $\updelta$ defined as the Hausdorff distance between unit balls.
For a subset $A \subset G_{r,d}$, we denote by $A_\eps = \{ x \in
\Gr_{r,d} : \updelta(x,A) < \eps\}$ the $\eps$-neighborhood of
$A$.

\begin{prop}[cf.\ {\cite[Sec.\ 6.6]{MS}}]\label{prp:levy}
    The space of $(\Gr_{r,d},\updelta,\upmu)$ is a \Defn{normal L\'evy family}
    (w.r.t.\ $d$), i.e.\ for every Borel subset $A\subset \Gr_{r,d}$ with
    $\upmu(A) = 1/2$, we have
    \[
        \mu(A_{\eps})\ =\ 1-\sqrt{\frac{\pi}{8}} \cdot e^{-
        \frac{1}{8} d\eps^2}
    \]
    for all $\eps > 0$.
\end{prop}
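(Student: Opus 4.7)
The plan is to deduce this from the spherical isoperimetric inequality of L\'evy and then transport the resulting concentration down to $\Gr_{r,d}$ via a Riemannian submersion, following the route laid out in \cite[Sec.~6.6]{MS}. The strategy has three stages: establish the concentration on $S^d$ itself, promote it to the orthogonal group $O(d)$, and then push it down to the Grassmannian.

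First I would treat the base case of $S^d$ with the geodesic distance and the uniform probability measure. L\'evy's isoperimetric inequality asserts that among Borel subsets of prescribed measure, the spherical caps minimize the measure of the complement of their $\eps$-neighborhoods. For any Borel $A \subset S^d$ with $\upmu(A) = 1/2$, comparison with a hemisphere reduces the problem to bounding the measure of a cap of angular radius $\pi/2-\eps$, which up to normalization is
\begin{equation*}
    \int_\eps^{\pi/2} \cos^{d-1}(\theta)\, d\theta.
\end{equation*}
A standard Gaussian tail estimate yields an upper bound of the form $\sqrt{\pi/8}\cdot e^{-d\eps^2/8}$, giving the normal L\'evy family property on the sphere with the advertised constants.

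To promote this to $\Gr_{r,d}$, I would use the transitive action of $O(d)$ with $\upmu$ realized as the pushforward of Haar measure along the quotient map $O(d) \to O(d)/(O(r)\times O(d-r)) = \Gr_{r,d}$. This map is a Riemannian submersion and is $1$-Lipschitz with respect to $\updelta$, so preimages of $\eps$-neighborhoods in the Grassmannian contain $\eps$-neighborhoods of preimages in $O(d)$. Since $O(d)$ itself is a normal L\'evy family in $d$ (it fibers iteratively over spheres $S^{d-1}, S^{d-2}, \dots$ and each factor contributes the concentration established in the previous paragraph, combined via the tensorization property of Gaussian concentration), any set of measure $1/2$ in $\Gr_{r,d}$ has $\eps$-neighborhood of measure at least $1 - \sqrt{\pi/8}\cdot e^{-d\eps^2/8}$, as required.

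The main obstacle in a careful write-up is the bookkeeping of constants across different metrics: $\updelta$ on $\Gr_{r,d}$ is the Hausdorff distance between unit balls of the subspaces, while the intrinsic Riemannian distance is measured by principal angles, and these are only bi-Lipschitz comparable. The factor $1/8$ in the exponent absorbs both this comparison and the passage from spherical caps to Gaussian tails, while $\sqrt{\pi/8}$ comes directly from the half-Gaussian normalization. For our applications only the Gaussian rate in $d$ matters, so the precise constant is cosmetic and I would simply invoke \cite[Sec.~6.6]{MS} for the final calibration.
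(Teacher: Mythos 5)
The paper does not actually prove this proposition: it is imported wholesale from Milman--Schechtman \cite[Sec.~6.6]{MS}, so the only fair comparison is with the argument in that source, and your outline follows exactly its architecture (L\'evy's spherical isoperimetric inequality, concentration for the orthogonal group, then pushing forward along the measure-preserving $1$-Lipschitz quotient $O(d)\to \Gr_{r,d}$). Two caveats. First, the step you label ``tensorization of Gaussian concentration'' is not the right mechanism: $O(d)$ is not a product of spheres, and concentration does not tensorize across the twisted fibrations $SO(d)\to S^{d-1}$ with fiber $SO(d-1)$. In \cite{MS} this step is done either by Gromov's Ricci-curvature comparison for $SO(d)$ (the Ricci curvature grows linearly in $d$, which is where the constants $\sqrt{\pi/8}$ and $1/8$ come from) or by the martingale method applied to the iterated fibration, which requires tracking the Lipschitz constants of the fiber maps; simply citing one of these closes the gap, but as written the sentence is not a proof. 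Second, your remark about the metric is the right one to make explicit: $\updelta$ is the Hausdorff distance between unit balls, which is dominated by the geodesic (principal-angle) metric, so the quotient map remains $1$-Lipschitz and the concentration estimate only improves --- no bi-Lipschitz loss actually enters. Finally, note that the equality in the proposition as printed should be the inequality $\upmu(A_\eps)\ \ge\ 1-\sqrt{\pi/8}\,e^{-d\eps^2/8}$; your argument correctly produces this inequality, and that is all the paper ever uses.
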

For the use of this proposition, note that for a metric probability space
$\mc{X}=(X,\delta,\mu)$ with 
\[
    \alpha_{\mc{X}}(\eps)\ :=\ 1-\inf\left\{\mu(A_{\eps}): A \subseteq X \
    \text{Borel},\ \mu(A)\ge \tfrac{1}{2}\right\},
\]
a $r$-Lipschitz function $f$ on $X$ satisfies
\[
    \mu(|f(x)-M_f|>\eps) \ \le \
    2\alpha_{\mc{X}}(\tfrac \eps r),
\]
for $M_f$ the L\'evy mean of $f$. Recall that the L\'evy mean $M_f$ satisfies
\[
\mu(f(x) \le  M_f) \ge \frac 1 2 \quad \text{and} \quad
\mu(f(x) \ge  M_f) \ge \frac 1 2.
\]

Let us mention another feature of measure
concentration on $S^d$ (and the Grassmannian): Consider $A^{d,k}$ any
$(d-k)$-dimensional totally geodesic subspace of $S^d$, endowed with its
natural intrinsic metric and uniform measure $\upmu$. Then there are uniform
constants $\widetilde{C}_k$, $\widetilde{c}_k>0$ such that
\begin{equation}\label{eq:orthogonal}
    \upmu(A^{d,k}_\eps) \ = \ 1-\widetilde{C}_k e^{-
    \widetilde{c}_k \cdot d\cdot \eps^2} .
\end{equation}

In addition to measure concentration, this inequality makes clear that if $A$
is a totally geodesic subspace of small dimension in $S^d$, then most of the
measure lies in the orthogonal complement to $A$. 

\section{Extensions of arrangements and Wills polynomials of the L\'evy mean}\label{sec:exts}

In this section we construct for every $c$-arrangement $\Arr$ a parametrized
family of arrangements. Viewed as a probability space, we can use measure
concentration to verify that the Wills polynomials corresponding to the
associated discotopes satisfy the deletion-contraction property of
characteristic polynomials and asymptotically almost surely coincide with
them. Ultimately, the proof of Theorem~\ref{thm:log_conc2} is probabilistic but
the intuition of measure concentration allows for a simple enough explanation.

\subsection{Uniform matroids -- an illustration}

The general philosophy of the proof is simple: Consider a uniformly
distributed collection of $n$ random vectors in $S^{d-1}\subset\R^d$ for $n <
d$. Let $\Zono_{n,d}$ be the corresponding probability space of zonotopes.
What is the Wills polynomial of a \emph{typical} zonotope in $\Zono_{n,d}$?

Clearly, for $d \gg 0$ large, measure concentration dictates that
$\Wills(Z;\lambda)$ for $Z\in \Zono_{n,d}$ almost surely equals the Wills
polynomial of the L\'evy mean, denoted by $\Wills(n,d;\lambda)$.  Moreover,
for $d \rightarrow \infty$, the random vectors are essentially orthogonal to
one another and the geometric
deletion-contraction~\eqref{eqn:wills_del_contract} of Wills polynomials
yields
\[
    \Wills(n,d;\lambda)  \ \asymp \ \Wills(n-1,d;\lambda) \ + \
    \Wills(n-1,d-1;\lambda)
\]
where $f\asymp g \ :\Leftrightarrow\ |f-g| \xrightarrow{\, d\rightarrow
\infty\,} 0$. For $n=d$ or $d = 1$ it is easy to verify that
$\Wills(n,d;\lambda) = (1+\lambda)^d$. 

The matroid corresponding to $n < d$ general vectors in $S^{d-1}$ is
independent of the chosen vectors and is the uniform matroid $U_{n,n}$.
Inspecting its characteristic polynomial now reveals that asymptotically
almost surely, $\Wills(n,d;\lambda) = \Char(U_{n,n};\lambda)$.  Log-concavity
of $\Char(U_{n,d};\lambda)$ then follows from Corollary~\ref{cor:wills_log}.

This example illustrates the underlying idea of our proof but also pinpoints
the obstacles that need to be overcome: The colinearities encoded by a typical
matroid prevent an associated zonotope from being random.  Consider the
uniform matroid $U_{2,3}$ on three elements with rank $2$. We realize it in
$\R^{d+1}$ by choosing two unit vectors $x, y$ uniformly at random in $S^d$,
and a third unit vector $z$ uniformly at random in their common span. Then $x$
and $y$ are almost orthogonal, but $z$ is not (since it is not sufficiently
independent), so the Wills polynomial of a random zonotope does not
concentrate.

To treat this problem, we rely on a extension operation, but one that changes
the matroid to a more ``flexible'' matroid. Nevertheless, the original
information shall not be lost completely.

\subsection{Extensions of arrangements and characteristic
polynomials}\label{ssec:extensions}

We consider three extension constructions for subspace arrangements.

\newcommand\Triv[1]{\mathsf{T}_{#1}}
\Defn{The trivial extension.} The trivial extension of an arrangement was
already implicitly used in the proof of Corollary~\ref{cor:wills_log}.  For an
arrangement $\Arr = \{ H_i \subset \R^d : i = 1,\dots,n \}$, the trivial
extension is
\[
\Triv{\ell}(\Arr) := \{ H_i \times \R^\ell \subset \R^{d + \ell} : i
    =1,\dots,n \}. 
\]
The intersection poset of $\Arr$ is unchanged but the dimension of every
element increases by $\ell$. In particular, for the characteristic polynomial
we have
\[
    \Char(\Triv{\ell}(\Arr); \lambda ) \ = \ \lambda^\ell \Char(\Arr;\lambda).
\]

The next two extensions depend the choice of generic subspaces and hence
produce a parametrized collection of arrangements.

\newcommand\LPE[1]{\mathsf{Pr}_{#1}}%
\newcommand\RP{\R\mathrm{P}}%
\Defn{The large product extension.} Let $\Arr$ be a $c$-arrangement in $\R^d$
and let $k,h \ge 1$ be fixed parameters. Let $\Arr' = \Triv{k}(\Arr)$ 
be the trivial extension to $\R^{d+k}$. Choose $k$ general directions
$s_1,\dots,s_k \in \RP^{d+k-1}$, called the \Defn{extension
directions}. For every $s_i$, let $(S_{i,j})_{j=1,\dots,h} \subset \R^{d+k}$
be distinct affine hyperplanes parallel to $s_i^\perp$. The
\Defn{large product extension} with respect to $k$ and $h$ is defined as
\[
    \LPE{k,h}(\Arr) \ := \ \Arr' \cup \{S_{ij} : i=1,\dots,k, j = 1,\dots,h\}.
\]
Note that $\LPE{k,h}(\Arr)$ is not central in general and a $c$-arrangement
only when $c=1$. The generalized normal $\mbf{n}_i = \frac{1}{2}(s_i \cap B^d)$
corresponding to $s_i^\perp$ is called the \Defn{extension direction} and
$(S_{i,j})_{j}$ are the \Defn{extension hyperplanes}.  For fixed $\Arr$, this
construction yields a collection of arrangements parametrized by
$(\RP^{d + k -1})^k$.
The characteristic polynomial is readily available as follows.

\begin{lem}\label{lem:char_large_prod}
    For a $c$-arrangement $\Arr$ and parameters $k,h \ge 1$
    \[
        \Char({\LPE{k,h}(\Arr)};\lambda)\ = \ (\lambda+h)\cdot
        \Char(\LPE{k-1,h}(\Arr);\lambda). 
    \]
\end{lem}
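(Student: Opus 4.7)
The plan is to apply the deletion-contraction identity~\eqref{eqn:char_del_contr} $h$ times, peeling off the extension hyperplanes $S_{k,1},\dots,S_{k,h}$ corresponding to the last extension direction one at a time.

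First I would set up notation. Identifying $\R^{d+k} = \R^{d+k-1}\times\R$ in such a way that $s_k$ has nonzero last coordinate (permissible for generic $s_k$), arrange that the first $k-1$ extension directions of $\LPE{k,h}(\Arr)$ lie in $\R^{d+k-1}\times\{0\}$, so that they may be reused as the extension directions of $\LPE{k-1,h}(\Arr)$. For $0\le j\le h$, set
\[
    \Arr_j \ := \ \Triv{1}(\LPE{k-1,h}(\Arr))\,\cup\,\{S_{k,1},\dots,S_{k,j}\},
\]
so $\Arr_0 = \Triv{1}(\LPE{k-1,h}(\Arr))$ and $\Arr_h = \LPE{k,h}(\Arr)$.

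Next I would apply deletion-contraction to $S_{k,j}$ in $\Arr_j$ for each $j=1,\dots,h$:
\[
    \Char(\Arr_j;\lambda) \ = \ \Char(\Arr_{j-1};\lambda) \,+\, \Char(\Arr_j/S_{k,j};\lambda).
\]
The key geometric claim is that $\Arr_j / S_{k,j}$ has the same intersection poset (with matching rank function) as $\LPE{k-1,h}(\Arr)$, and therefore the same absolute characteristic polynomial. Indeed, the parallel hyperplanes $S_{k,1},\dots,S_{k,j-1}$ are disjoint from $S_{k,j}$ and contribute nothing, while for $s_k$ in generic position every flat $T\times\R$ of $\Triv{1}(\LPE{k-1,h}(\Arr))$ meets $S_{k,j}$ transversally in an affine subspace of dimension $\dim T$. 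Rank-preserving transverse intersection induces a poset isomorphism onto the intersection poset of $\LPE{k-1,h}(\Arr)$, and since $\Char$ depends only on this data, the contractions all have the same $\Char$.

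Summing the $h$ identities telescopes to
\[
    \Char(\LPE{k,h}(\Arr);\lambda) \ = \ \Char(\Triv{1}(\LPE{k-1,h}(\Arr));\lambda) \,+\, h\cdot\Char(\LPE{k-1,h}(\Arr);\lambda),
\]
and substituting the trivial-extension identity $\Char(\Triv{1}(\Arr');\lambda)=\lambda\,\Char(\Arr';\lambda)$ recorded earlier yields the desired factorization. The main obstacle I expect is the genericity step, namely checking that for generic $s_k$ the hyperplane $S_{k,j}$ meets every flat of $\Triv{1}(\LPE{k-1,h}(\Arr))$ in the expected codimension with no accidental coincidences, so that the induced arrangement on $S_{k,j}$ is genuinely combinatorially equivalent to $\LPE{k-1,h}(\Arr)$; once this is in hand the rest is routine.
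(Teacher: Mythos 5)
Your proof is correct and follows the same route as the paper: the paper's argument is precisely to observe that contracting an extension hyperplane of the last direction gives $\LPE{k-1,h}(\Arr)$ and then to iterate the deletion--contraction identity~\eqref{eqn:char_del_contr} over the $h$ hyperplanes, with the final deletion being a trivial extension contributing the factor $\lambda$. You have merely spelled out the telescoping and the genericity/transversality check that the paper leaves implicit.
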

\begin{proof}
Observe that
\[
    \LPE{k,h}(\Arr / S_{ij}) \ \cong \ \LPE{k-1,h}(\Arr).
\]
Iterating~\eqref{eqn:char_del_contr} yields the claim.
\end{proof}

Hence, we can recover the characteristic polynomial of $\Arr$ as
\[
        \Char(\Arr;\lambda) \ = \ 
    \lim_{h \rightarrow \infty}
    \frac{\Char({\LPE{k,h}(\Arr)};\lambda)}{h^k}.
\]
The large product extension can be further augmented by a trivial extension,
and we abbreviate
$\LPE{k,h,\ell}(\Arr):=\Triv{\ell}(\LPE{k,h}(\Arr))$.

\newcommand\SFE[1]{\mathsf{Sf}_{#1}}
\Defn{The semiflexible extension.}
Let $\Arr$ be an arrangement with a distinguished element $H_e \in \Arr$ and
generalized normal $\mbf{n}_e$. For parameters $k,h \ge 1$, the semiflexible
extension $\SFE{k,h}(\Arr, e)$ is obtained from the large
product extension $\LPE{k,h}(\Arr)$ as follows:
\[
\SFE{k,h}(\Arr, e) \ := \ (\LPE{k,h}(\Arr) \setminus \{H_e\}) \cup
\{H_{e'}\}
\]
where $H_{e'}$ is a linear subspace of dimension $\dim H_e$ whose generalized normal is in general
position in $\mbf{n}_e + \sum_{i=1}^k \mbf{n}_i$, where
$\mbf{n}_1,\dots,\mbf{n}_k$ are the
extension normals.  The element $e'$ is called the \Defn{semiflexible element}
of the extension.

\begin{lem}\label{lem:flexible_extensions}
Let $\Arr$ be an arrangement with distinguished element $e$. Then
\begin{align*}
\Char(\SFE{k,h}(\Arr,e);\lambda) & \ = \
 h \cdot \Char(\SFE{k-1,h}(\Arr,e);\lambda)  \ + \
    \Char({\LPE{k-1,h}(\Arr/e)};\lambda) \ + \
    \Char({\LPE{k-1,h}(\Arr \backslash e)};\lambda).
\end{align*}
\end{lem}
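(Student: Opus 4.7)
The plan is to apply the deletion--contraction relation \eqref{eqn:char_del_contr} in two successive stages, mirroring the pattern in the proof of Lemma~\ref{lem:char_large_prod}. First I strip away the $h$ extension hyperplanes $S_{k,1},\ldots,S_{k,h}$ parallel to $s_k^\perp$; then I perform deletion--contraction on the semiflexible element $H_{e'}$.

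For the first stage, iteratively applying deletion--contraction to $S_{k,h},\ldots,S_{k,1}$ peels off each one, and at every step the contraction by $S_{k,j}$ removes the other hyperplanes parallel to it and restricts the remaining arrangement to $S_{k,j}\cong\R^{d+k-1}$. The key isomorphism is
\[
    \SFE{k,h}(\Arr,e)/S_{k,j}\ \cong\ \SFE{k-1,h}(\Arr,e);
\]
intersecting $H_{e'}$ with the generic affine hyperplane $S_{k,j}$ yields a codim-$c$ subspace of $S_{k,j}$ whose generalized normal is the projection of $H_{e'}^\perp$ along $\mr{span}(s_k)$, and by genericity this projection sits in general position in $H_e^\perp \oplus \mr{span}(s_1,\ldots,s_{k-1})$---exactly the semiflexibility condition for parameter $k-1$. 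Summing the $h$ equal contraction contributions produces
\[
    \Char(\SFE{k,h}(\Arr,e);\lambda)\ =\ h\cdot\Char(\SFE{k-1,h}(\Arr,e);\lambda)\ +\ \Char(\mc{B};\lambda),
\]
where $\mc{B}:=\SFE{k,h}(\Arr,e)\setminus\{S_{k,1},\ldots,S_{k,h}\}$.

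In the second stage I apply deletion--contraction to $H_{e'}$ inside $\mc{B}$. The deletion $\mc{B}\setminus H_{e'}$ is the large product extension $\LPE{k-1,h}(\Arr\setminus e)$---sitting in $\R^{d+k}$ with the now-free $s_k$-direction adjoined trivially---so it contributes $\Char(\LPE{k-1,h}(\Arr\setminus e);\lambda)$. For the contraction I exploit the crucial geometric observation
\[
    H_e\ \subset\ H_{e'},
\]
which follows from $H_{e'}^\perp\subset H_e^\perp\oplus \mr{span}(s_1,\ldots,s_k)$ by taking orthogonal complements in $\R^{d+k}$. Using this inclusion, the $k$-dimensional complement of $H_e$ inside $H_{e'}$ plays the role of the trivial extension $\R^k$ of $\Arr/e$, while the remaining extension hyperplanes $S_{i,j}\cap H_{e'}$ with $i<k$ serve as the $k-1$ generic directions; this identifies $\mc{B}/H_{e'}\cong \LPE{k-1,h}(\Arr/e)$ and contributes $\Char(\LPE{k-1,h}(\Arr/e);\lambda)$.

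The main obstacle is precisely this identification in the second stage. Verifying $\mc{B}/H_{e'}\cong \LPE{k-1,h}(\Arr/e)$ requires checking that the intersection poset of the contracted arrangement matches that of the large product extension of $\Arr/e$, using the inclusion $H_e\subset H_{e'}$ together with the generic position of $H_{e'}^\perp$ within $H_e^\perp\oplus\mr{span}(s_1,\ldots,s_k)$ to align codimensions and rule out unexpected coincidences. Once that geometric isomorphism is settled, combining the two stages yields the claim.
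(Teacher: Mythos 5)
Your two-stage plan is exactly the paper's proof: first peel off the $h$ extension hyperplanes of the direction $s_k$ via the isomorphism $\SFE{k,h}(\Arr,e)/S_{k,j}\cong\SFE{k-1,h}(\Arr,e)$, then apply deletion--contraction to $H_{e'}$ in the remaining arrangement $\mc{B}$, with the deletion giving $\LPE{k-1,h}(\Arr\setminus e)$ and the restriction to $H_{e'}$ giving $\LPE{k-1,h}(\Arr/e)$. The first stage and the deletion step are fine and agree with the paper. The problem is the justification you give for the contraction step.

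The ``crucial geometric observation'' $H_e\subset H_{e'}$ is false, and taking orthogonal complements does not yield it. From $H_{e'}^\perp\subseteq H_e^\perp\oplus\mr{span}(s_1,\dots,s_k)$ (where $H_e^\perp$ is the $c$-dimensional normal space of $H_e\times\R^k$ in $\R^{d+k}$) one only gets $H_{e'}\supseteq\bigl(H_e^\perp\oplus\mr{span}(s_1,\dots,s_k)\bigr)^\perp=(H_e\times\R^k)\cap\mr{span}(s_1,\dots,s_k)^\perp$, a subspace of dimension $d-c$, not $H_e\times\R^k$ itself. Indeed, $H_e\times\R^k\subseteq H_{e'}$ would force $H_{e'}=H_e\times\R^k$ by dimension count (both have codimension $c$), which is precisely the degenerate choice that the semiflexible construction is designed to avoid; and $H_e\times\{0\}\subseteq H_{e'}$ also fails, since the generic directions $s_i$ are not orthogonal to $\R^d\times\{0\}$, so $H_{e'}^\perp\not\subseteq H_e^\perp\times\R^k$. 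Consequently your description of ``the $k$-dimensional complement of $H_e$ inside $H_{e'}$'' playing the role of the trivial extension does not parse: $H_{e'}$ contains only a $(d-c)$-dimensional copy of $H_e$. The identification $\mc{B}/H_{e'}\cong\LPE{k-1,h}(\Arr/e)$ (up to a one-dimensional trivial extension, a point the paper also glosses over) is correct, but the paper derives it from the general position of $H_{e'}^\perp$ inside $\mbf{n}_e+\sum_{i=1}^k\mbf{n}_i$: genericity is what guarantees that intersecting the subspaces $H_i\times\R^k$ and the remaining extension hyperplanes with $H_{e'}$ reproduces the intersection poset of $\LPE{k-1,h}(\Arr/e)$. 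Since you yourself flag this poset verification as the main obstacle and leave it conditional, while the inclusion you propose to settle it with is wrong, the key step of your argument is not established as written.
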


\begin{proof}
    For an extension hyperplane $S$ of $\SFE{k,h}(\Arr,e)$, we note that
    \[
        \SFE{k,h}(\Arr,e) / S \ \cong \ \SFE{k-1,h}(\Arr,e).
    \]
    Hence, iterating~\eqref{eqn:char_del_contr} for all $h$ extension
    hyperplanes of $s$ yields
    \[
        \Char(\SFE{k,h}(\Arr,e);\l) \ = \ h \Char(\SFE{k-1,h}(\Arr,e);\l) +
        \Char(\Arr';\l).
    \]
    Now, $\Arr'$ is an arrangement in $\R^{d+k}$ with the distinguished
    subspace $H_{e'}$. The deletion of $H_{e'}$ results in an arrangement
    $\LPE{k-1,h}(\Arr \setminus e)$ but embedded in $\R^{d+k}$.  Since
    $H_{e'}$ is in general position to the other subspaces, it follows that
    the restriction to $H_{e'}$ yields $\LPE{k-1,h}(\Arr / e)$ from which the
    claim follows.
\end{proof}

Combining Lemma~\ref{lem:char_large_prod} and
Lemma~\ref{lem:flexible_extensions} yields the following.
\begin{cor}\label{cor:flex_limit}
    \[
        \lim_{h\rightarrow \infty}\ \frac{\Char(\SFE{k,h}(\Arr,e);\lambda)}{h^k}
        = \lim_{h\rightarrow \infty}\
        \frac{\Char({\LPE{k,h}(\Arr)};\lambda)}{h^k} = \Char(\Arr;\lambda).
    \]
\end{cor}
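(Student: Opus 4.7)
The plan is to read off both limits from the two preceding lemmas by induction on $k$, treating the expressions as polynomials in $\lambda$ of bounded degree so that ``limit'' means coefficient-wise convergence.

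First I would dispose of the right-hand equality by iterating Lemma~\ref{lem:char_large_prod} starting from the trivial base $\LPE{0,h}(\Arr) = \Arr$ (no extension directions, no extension hyperplanes), which yields the closed form
\[
\Char(\LPE{k,h}(\Arr);\lambda) \ = \ (\lambda + h)^k \, \Char(\Arr;\lambda).
\]
Dividing by $h^k$ gives $(1 + \lambda/h)^k \, \Char(\Arr;\lambda)$, whose coefficients converge to those of $\Char(\Arr;\lambda)$ as $h\to\infty$.

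For the left-hand equality I would induct on $k$, with base case $k = 0$: here $\SFE{0,h}(\Arr,e) = \Arr$, since there are no extension directions and the prescription for $H_{e'}$ forces its generalized normal back into $\mbf{n}_e$. For the inductive step I would apply Lemma~\ref{lem:flexible_extensions} and divide through by $h^k$ to get
\[
\frac{\Char(\SFE{k,h}(\Arr,e);\lambda)}{h^k} \ = \ \frac{\Char(\SFE{k-1,h}(\Arr,e);\lambda)}{h^{k-1}} \ + \ \frac{\Char(\LPE{k-1,h}(\Arr/e);\lambda) + \Char(\LPE{k-1,h}(\Arr\setminus e);\lambda)}{h^k}.
\]
By the closed form from the first paragraph, the last summand equals
\[
\frac{(\lambda+h)^{k-1}}{h^k}\bigl(\Char(\Arr/e;\lambda) + \Char(\Arr\setminus e;\lambda)\bigr),
\]
whose coefficients are $O(1/h)$ and thus vanish in the limit. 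Hence the limit of the left-hand side reduces to the limit at level $k-1$, which by the induction hypothesis equals $\Char(\Arr;\lambda)$.

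There is no genuine obstacle here beyond the bookkeeping; the only subtlety worth flagging is the interpretation of the base cases $\LPE{0,h}(\Arr)$ and $\SFE{0,h}(\Arr,e)$, and the observation that convergence is meant coefficient-wise (which is legitimate because all polynomials involved have degree at most $\dim \Arr + k$, independent of $h$, so coefficient-wise convergence of the finitely many coefficients suffices).
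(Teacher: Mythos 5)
Your argument is correct and is exactly the route the paper intends: the paper proves the corollary simply by ``combining'' Lemma~\ref{lem:char_large_prod} and Lemma~\ref{lem:flexible_extensions}, and your write-up just makes that combination explicit, via the closed form $\Char(\LPE{k,h}(\Arr);\lambda)=(\lambda+h)^k\Char(\Arr;\lambda)$ and the induction on $k$ in which the $\LPE{k-1,h}$ terms contribute only $O(1/h)$ per coefficient. Your flagged points (the $k=0$ base cases and coefficient-wise convergence of polynomials of bounded degree) are handled correctly, so there is nothing to add.
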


The semiflexible extension can be further augmented by a trivial extension,
and we abbreviate $\SFE{k,h,\ell}(\Arr,e) := \Triv{\ell}(
\SFE{k,h}(\Arr,e))$.

\begin{rem}\label{Mnev}
The realization space of a subspace arrangement is defined as the space of
coordinatizations, within respective Grassmannians, modulo affine
transformations. The extension of an arrangement and the original arrangement
have homotopy equivalent realization spaces almost surely.  In fact, it is not
hard to check that the realization space of a large product extension and the
semiflexible extension is stably equivalent to the realization space of the
original arrangement in the sense of Mn{\"e}v \cite{MnevRoklin}. 

This allows us to make an interesting philosophical observation:
\emph{Mn\"ev Universality} and its refinement by
Vakil~\cite{vakil,vakil2} and Kapovich--Millson~\cite{KJ} asserts that
realization spaces of arrangements can be arbitrarily complicated.  Hence,
\emph{topologically} realization spaces of extensions behave badly.  On the
other hand, measure concentration is unaffected by these topological
pathologies as asymptotically this influence vanishes.  
\end{rem}

\subsection{Pushforward measures on arrangement extensions}
We are now interested in the effect of large product and semiflexible
extensions on the Wills polynomial. The trivial extension only increases the
ambient dimension and hence leaves the Wills polynomial unaffected.
Throughout this section let $\Arr$ be a fixed (linear) $c$-arrangement in
$\R^d$ with elements labelled $e_1,\dots,e_n$. For fixed $k,h,\ell$ define
\[
    \SFE{k,h,\ell}(\Arr,e_1,\dots,e_n) \ := \
    \SFE{k,h,\ell}(\SFE{k,h,\ell}(\Arr,e_1,\dots,e_{n-1}),e_n)
\]
with $\SFE{k,h,\ell}(\Arr,e_1)$ as defined in Section~\ref{ssec:extensions}.
This is an arrangement of $n + n \cdot k\cdot h$ subspaces in a Euclidean
space of dimension $d + n \cdot (k+\ell)$. For every element $e_i$ there is a
corresponding semiflexible element $e'_i$. More precisely,
$\SFE{k,h,\ell}(\Arr,e_1,\dots,e_n)$ is a collection of arrangements
parametrized by 
\begin{equation}\label{eqn:param_space}
    (\RP^{d+ n(k+\ell) -1})^{kn} \times (\Gr_{k+c,c})^n
\end{equation}
corresponding to the choice of $kn$ (general) extension directions and $n$
semiflexible elements. Note that for chosen extension directions
$s_1,\dots,s_k$, a semiflexible element $H_e'$ for the codimension-$c$
subspace $H_e$ corresponds to the choice of a $c$-dimensional subspace in
$H_e^\perp + \mathrm{span}\{s_1,\dots,s_k\} \cong \R^{k+c}$. The particular
choice of the extension hyperplanes is irrelevant for our purpose.

The uniform measure on~\eqref{eqn:param_space} makes
$\SFE{k,h,\ell}(\Arr,e_1,\dots,e_n)$ into a probability space and the Wills
polynomial of the discotope corresponding to $\Arr' \sim
\SFE{k,h,\ell}(\Arr,e_1,\dots,e_n)$ is a random variable.  We note the
following consequence of deleting, respectively contracting the semiflexible
element $e'_n$:
\begin{align}\label{eqn:SFE_del}
    \SFE{k,h,\ell}(\Arr,e_1,\dots,e_n) \backslash e'_n &\ \cong \ 
    \LPE{k,h,\ell}(\SFE{k-1,h,\ell}(\Arr \setminus e_n ,e_1,\dots,e_{n-1})) \\
    \intertext{and}
\label{eqn:SFE_contr}
    \SFE{k,h,\ell}(\Arr,e_1,\dots,e_n) / e'_n &\ \cong \ 
        \LPE{k,h,\ell}(\SFE{k-1,h,\ell}(\Arr / e_n ,e_1,\dots,e_{n-1})).
\end{align}
The corresponding maps
\begin{align*}
    (\RP^{d+ n(k+\ell) -1})^{kn} \times (\Gr_{k+c,c})^n
    &\ \rightarrow \
    (\RP^{d+ n(k+\ell) -1})^{kn} \times (\Gr_{k+c,c})^{n-1} \; \text{ and }\\
    (\RP^{d+ n(k+\ell) -1})^{kn} \times (\Gr_{k+c,c})^n
    &\ \rightarrow \
    (\RP^{d+ n(k+\ell) -2})^{kn} \times (\Gr_{k+c,c})^{n-1}
\end{align*}
yield a pushforward of the uniform measure that will be utilized in the proof
of the following result.

For a polynomial $p(\lambda) = \sum_i a_i \lambda^i$, let us denote by
$[p(\lambda)]_i = a_i$
the coefficient of $\lambda^i$. We also abbreviate $\Wills(\Arr;\lambda) =
\Wills(\mc{Z}(\Arr);\lambda)$.

\begin{thm}\label{thm:measure1}
    Let $\Arr$ be a $c$-arrangement on $n$ elements. For sufficiently fast
    growing sequences $(h_k)_{k}$ and $(\ell_k)_k$ the following holds
    asymptotically almost surely for $k \rightarrow \infty$
    \[
    h_k^{-kn}\, \left[\Wills(\Arr';\lambda) \right]_{i\cdot c}
        \ \asymp\ \
        h_k^{-k(n-1)}\, \left[ 
        \Wills(\Arr'';\lambda)+\Wills(\Arr''';\lambda) \right]_{i\cdot c}
    \]        
    for all $0 \le i \le \frac{\rk{\Arr}}{c}$ and where where $\Arr' \sim
    \SFE{k,h_k,\ell_k}(\Arr,e_1,\dots,e_n)$, $\Arr'' \sim
    \SFE{k,h_k,\ell_k}(\Arr \setminus e_n, e_1,\dots,e_{n-1})$, and $\Arr'''
    \sim \SFE{k,h_k,\ell_k}(\Arr/e_n,e_1,\dots,e_{n-1})$.
\end{thm}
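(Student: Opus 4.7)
The plan is to reduce the Wills polynomial of the random discotope $\mathcal{Z}(\Arr')$ to those of $\mathcal{Z}(\Arr'')$ and $\mathcal{Z}(\Arr''')$ by applying the discotope analogue of the zonotope deletion--contraction identity~\eqref{eqn:wills_del_contract} with respect to the distinguished generator $\mathbf{n}_{e'_n}$ (the generalized normal of the semiflexible element $e'_n$), and then invoking the isomorphisms~\eqref{eqn:SFE_del} and~\eqref{eqn:SFE_contr}. The required orthogonality conditions will hold asymptotically almost surely by measure concentration (Proposition~\ref{prp:levy}).

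First I would condition on all the random data except for the $c$-subspace defining $\mathbf{n}_{e'_n}$, which is uniform on $\Gr_{c,c+k}$ inside the $(c+k)$-dimensional ambient subspace $H_{e_n}^\perp + \operatorname{span}(s_{n,1},\dots,s_{n,k}) \subset \R^{d + n(k+\ell_k)}$. Applying~\eqref{eq:orthogonal} with the natural embedding into the ambient sphere, and taking $\ell_k$ large enough that the ambient dimension dwarfs $c+k$, the direction of $\mathbf{n}_{e'_n}$ is near-orthogonal to every other generator of $\mathcal{Z}(\Arr')$ with probability tending to $1$ exponentially fast. Under this near-orthogonality, the Minkowski decomposition $\mathcal{Z}(\Arr') = \mathbf{n}_{e'_n} + \mathcal{Z}(\Arr'\setminus e'_n)$ becomes asymptotically a Cartesian product, so that
\[
    \Wills(\Arr'; \lambda) \;\asymp\; \Wills(\mathbf{n}_{e'_n};\lambda)\cdot \Wills(\Arr'\setminus e'_n;\lambda).
\]
Since $\Wills(\mathbf{n}_{e'_n};\lambda)$ is a fixed polynomial of degree $c$ whose constant and leading coefficients are both $1$, extracting the coefficient at $\lambda^{ic}$ yields the desired deletion--contraction-type sum; intermediate coefficients of the $c$-disc's Wills polynomial contribute only at degrees with asymptotically negligible weight in $\Wills(\Arr'\setminus e'_n)$ (established inductively on $n$).

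Using~\eqref{eqn:SFE_del} and~\eqref{eqn:SFE_contr}, I would then rewrite $\Wills(\Arr'\setminus e'_n)$ and $\Wills(\Arr'/e'_n)$ as Wills polynomials of $\LPE{k,h_k,\ell_k}(\SFE{k-1,h_k,\ell_k}(\cdot))$. The $\LPE{k,h_k,\ell_k}$ wrapper adds $k$ extension directions, each containing $h_k$ parallel hyperplanes whose normals Minkowski-sum to a segment of length $h_k$. Iterating~\eqref{eqn:wills_del_contract} over these segments, and again invoking measure concentration to secure pairwise orthogonality of the $k$ extension directions with each other and with $\mathcal{Z}(\mathcal{B})$, one obtains
\[
    \Wills(\LPE{k,h_k,\ell_k}(\mathcal{B}); \lambda) \;\asymp\; \Wills(\mathcal{B}; \lambda)\cdot (h_k+\lambda)^{k}.
\]
After normalizing by $h_k^{k}$, this isolates $[\Wills(\mathcal{B})]_{ic}$ at the coefficient level. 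To bridge the $\SFE{k-1,h_k,\ell_k}$ appearing on the right-hand side of~\eqref{eqn:SFE_del}/\eqref{eqn:SFE_contr} with the $\SFE{k,h_k,\ell_k}$ appearing in $\Arr''$ and $\Arr'''$, one reapplies the same argument: adding the missing extension direction produces an additional $(h_k+\lambda)$ factor, which is precisely absorbed into the difference between the normalizing exponents $h_k^{-kn}$ on the left and $h_k^{-k(n-1)}$ on the right.

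The main technical obstacle, \emph{as always with measure concentration proofs}, is the quantitative control of accumulated errors. Each multiplicative decomposition incurs a relative error proportional to the sine of a random angle between one generator and the span of the rest, and these errors must survive through $O(kn)$ orthogonality events and through the iterated deletion--contractions without overwhelming the $h_k$-normalization. The estimate~\eqref{eq:orthogonal} delivers exponentially small deviation probabilities per event, but translating this into an a.a.s.\ statement at the polynomial-coefficient level demands that $(h_k)_k$ and $(\ell_k)_k$ grow sufficiently fast to dominate the combinatorial factors arising from the iterated decomposition and from the intermediate coefficients of $\Wills(\mathbf{n}_{e'_n})$ -- this is exactly the hypothesis in the theorem statement.
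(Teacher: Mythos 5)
Your proposal follows essentially the same route as the paper's proof: concentration on the parameter space, asymptotic orthogonality of the semiflexible normal $\mbf{n}_{e'_n}$, an asymptotic deletion--contraction for the Wills polynomial at the coefficients $i\cdot c$, the identifications \eqref{eqn:SFE_del} and \eqref{eqn:SFE_contr}, and the observation that the large product extension multiplies the Wills polynomial by $h_k^{k}$ up to lower-order terms. Your product factorization $\Wills(\Arr';\lambda)\asymp\Wills(\mbf{n}_{e'_n};\lambda)\cdot\Wills(\Arr'\setminus e'_n;\lambda)$, together with the analysis of the intermediate intrinsic volumes of the $c$-ball being drowned out by the $h_k$-normalization, is in fact a more explicit rendering of the step the paper disposes of with ``by choice of normalization''.

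There is, however, one genuine flaw in how you justify the central orthogonality step. After conditioning, the randomness of $\mbf{n}_{e'_n}$ lives entirely inside the fixed $(c+k)$-dimensional subspace $H_{e_n}^\perp+\mathrm{span}(s_{n,1},\dots,s_{n,k})$; enlarging the ambient dimension by the trivial extension (taking $\ell_k$ large) does not change this conditional distribution at all, so invoking \eqref{eq:orthogonal} ``in the ambient sphere'' cannot produce near-orthogonality of $\mbf{n}_{e'_n}$ to precisely those generators that share this subspace, namely the extension directions $s_{n,1},\dots,s_{n,k}$ and the old normal $\mbf{n}_{e_n}$. What makes $\mbf{n}_{e'_n}$ a.a.s.\ nearly orthogonal to each of these individually is concentration on $\mathrm{Gr}_{c,c+k}$ as $k\to\infty$ (a uniformly random $c$-plane in $\R^{c+k}$ is nearly orthogonal to any fixed subspace of bounded dimension); this is what the paper means by ``the specific geometry of the Grassmannian'', and note that $\mbf{n}_{e'_n}$ is emphatically \emph{not} nearly orthogonal to the joint span of the $s_{n,i}$, so the factorization must be argued from pairwise control only. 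The parameter $\ell_k$ plays a different role that your write-up omits: it is taken large relative to $h_k$ so that the intrinsic-volume functionals, which are Lipschitz on the parameter space with constants growing in $h_k$, concentrate around their L\'evy mean via Proposition~\ref{prp:levy}; this ``virtual constancy'' is what permits comparing the three independently drawn arrangements $\Arr'$, $\Arr''$, $\Arr'''$ in the statement, rather than the coupled deletion/contraction of a single sample that your conditioning argument actually produces.
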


\begin{proof}
    Notice that if $\ell_k$ is a sequence of positive integers large enough
    with respect to $k$, then $\SFE{k,h,\ell_k}(\Arr,e_1,\dots,e_n)$ is a
    normal L\'evy family following Proposition~\ref{prp:levy} (independent of
    the value of $h$). 

    The intrinsic volumes of the associated discotopes are Lipschitz
    continuous functions on the parameter space of
    $\SFE{k,h,\ell_k}(\Arr,e_1,\dots,e_n)$ with Lipschitz constants depending
    on $h$ for the extension hyperplanes and on $\Arr$ for all other elements.
    It follows that if $\ell_k$ is large enough with respect to $h_k$, then
    $\Wills(\cdot;\lambda)$ converges to the L\'evy mean asymptotically almost
    surely. We may therefore treat the Wills polynomial of $\Arr' \sim
    \SFE{k,h_k,\ell_k}(\Arr,e_1,\dots,e_n)$ as virtually constant.

    The specific geometry of the Grassmannian stronger dictates that the
    normal $\mbf{n}_{e'_n}$ is a.a.s.\ orthogonal to all other elements of the
    arrangement $\Arr'$.  Hence, for a random element $\Arr'$
    \[
        \left[ \Wills(\Arr';\lambda) \right]_{i\cdot c} \ \asymp \ 
        \left[ \Wills(\Arr'\setminus e'_n;\lambda) \ + \
        \Wills(\Arr / e'_n;\lambda) \right]_{i \cdot c}
    \]
    for all $0\le  i \le \frac{\rk(\Arr)}{c}$ by choice of normalization.  
    By~\eqref{eqn:SFE_del} and~\eqref{eqn:SFE_contr}, we may further
    approximate
    \begin{align*}
        h_k^{-kn}\,\Wills(\Arr' \setminus e_n;\lambda) &\ \asymp \
        h_k^{-kn}\, \Wills(\scr{B};\l) &\text{with }  
        \scr{B} \sim \LPE{k,h_k,\ell_k} \SFE{k,h_k,\ell_k} (\Arr'
        {\setminus} e_n,e_1,\dots,e_{n-1}) \\
        \intertext{ and}
        h_k^{-kn}\, \Wills(\Arr/e_n;\lambda) &\ \asymp \ 
        h_k^{-kn}\, \Wills(\scr{C};\l) &\text{with } \scr{C} \sim
        \LPE{k,h_k,\ell_k} \SFE{k,h_k,\ell_k} (\Arr'/
        e_n,e_1,\dots,e_{n-1}).
    \end{align*}
    as $k \rightarrow \infty$. 
    Finally, observe that the asymptotic effect of a large product extension
    $\LPE{k,h_k,\ell_k}$ on $\Wills$ is a multiplication of the Wills
    polynomial by $(h_k)^k$ (within a constant error term).
\end{proof}

The relation between coefficients of Wills polynomials yields our main result.

\begin{thm}\label{thm:main}
    Let $\Arr$ be a $c$-arrangement with elements $e_1,\dots,e_n$ and
    $(h_k)_k, (\ell_k)_k$ sufficiently fast growing sequences. For $k
    \rightarrow \infty$ asymptotically almost surely
    \[
        {(h_k)^{-kn}}\cdot\nu_{i}(\Arr') 
        \ \asymp \ \gamma_{i}(\Arr)
    \]
    where $\Arr' \sim \SFE{k,h_k,\ell_k}(\Arr,e_1,\dots,e_n)$, $i=jc$, and $0
    \le j \le d$. In particular, the sequence
    $(\gamma_0(\Arr),\dots,\gamma_r(\Arr))$ of Whitney numbers is log-concave.
\end{thm}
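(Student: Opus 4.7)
The plan is to establish the asymptotic identity
\[
  (h_k)^{-kn}\,\nu_{jc}(\Arr')\;\asymp\;\gamma_j(\Arr)
\]
by induction on $n = |\Arr|$, and then to deduce log-concavity of the Whitney numbers from log-concavity of the full intrinsic-volume sequence $(\nu_i(\Arr'))_i$ by a limit argument. The base case $n=0$ is immediate: the relevant discotope collapses to a point, so $\nu_0 = 1$ and the higher $\nu_i$ vanish, matching $\gamma_0 = 1$ and $\gamma_j = 0$ for $j \ge 1$.

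For the inductive step the key input is Theorem~\ref{thm:measure1}, which gives, a.a.s.\ as $k \to \infty$,
\[
  h_k^{-kn}\,\nu_{jc}(\Arr')\;\asymp\;h_k^{-k(n-1)}\bigl(\nu_{jc}(\Arr'')+\nu_{jc}(\Arr''')\bigr),
\]
where $\Arr''\sim\SFE{k,h_k,\ell_k}(\Arr\setminus e_n,e_1,\dots,e_{n-1})$ and $\Arr'''\sim\SFE{k,h_k,\ell_k}(\Arr/e_n,e_1,\dots,e_{n-1})$ are extensions of $c$-arrangements on $n-1$ elements. Applying the inductive hypothesis to each rewrites the right-hand side as $\gamma_j(\Arr\setminus e_n)+\gamma_j(\Arr/e_n)$, and the deletion-contraction recurrence~\eqref{eqn:char_del_contr} for $\Char$, combined with $\Char(\Arr;\lambda)=\lambda^{d-r}\Char(M_\Arr;\lambda^c)$ to match indices at position $jc$, assembles this sum into $\gamma_j(\Arr)$. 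A finite union of a.a.s.\ events is a.a.s., so the induction closes.

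Having identified $\gamma_j(\Arr)$ as the a.a.s.\ limit of a normalized intrinsic volume, log-concavity becomes a clean limiting inequality. Corollary~\ref{cor:wills_log} states that for every realization $\Arr'$ the entire sequence $(\nu_i(\Arr'))_i$ is log-concave, and concavity of $i\mapsto \log\nu_i(\Arr')$ is preserved upon restriction to any arithmetic subsequence, in particular to $(\nu_{jc}(\Arr'))_j$. Passing to the a.a.s.\ limit of the inequality
\[
  \nu_{(j-1)c}(\Arr')\,\nu_{(j+1)c}(\Arr')\;\le\;\nu_{jc}(\Arr')^{2}
\]
and invoking Rota's strict positivity $\gamma_j(\Arr)>0$ to exclude a degenerate $0\le 0$ outcome yields $\gamma_{j-1}(\Arr)\gamma_{j+1}(\Arr)\le\gamma_j(\Arr)^2$, as desired.

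The heavy lifting has already been performed in Theorem~\ref{thm:measure1}, which fuses the L\'evy--Milman concentration with the deletion-contraction behaviour of $\Wills$ on discotopes; the induction and the passage to a log-concave limit are the clean outer layer. The only obstacle I would watch carefully is the coordination of the growth rates of $(h_k)_k$ and $(\ell_k)_k$ uniformly across the $n$ levels of the induction, so that the L\'evy-mean concentration statements on all the parameter spaces in~\eqref{eqn:param_space} hold simultaneously -- but this is a union-bound exercise over finitely many inductive levels rather than a conceptual difficulty.
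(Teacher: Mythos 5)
Your proposal follows essentially the same route as the paper's own (very terse) proof: induction on the number of elements with a trivial base case, the inductive step supplied by Theorem~\ref{thm:measure1} so that the normalized intrinsic volumes inherit the deletion--contraction recurrence~\eqref{eqn:char_del_contr}, and log-concavity obtained by passing the inequality of Corollary~\ref{cor:wills_log} to the a.a.s.\ limit. Your additional remarks --- restricting the log-concave sequence $(\nu_i(\Arr'))_i$ to the arithmetic subsequence of indices $jc$, and coordinating the growth of $(h_k)_k,(\ell_k)_k$ across the finitely many inductive levels --- only make explicit steps the paper leaves implicit, so no separate comparison is needed.
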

\begin{proof}
    For $n=1$ and $\Arr = \{ H \}$, the claim is immediate with the chosen
    normalization. For $n > 1$, it follows from Theorem~\ref{thm:measure1}
    that for $k \rightarrow \infty$, the Wills polynomial satisfies the same
    deletion-contraction relation~\eqref{eqn:char_del_contr} as the
    characteristic polynomial which completes the first claim. The
    log-concavity of the Whitney numbers $(\gamma_i)$ now follows from 
    Corollary~\ref{cor:wills_log}.
\end{proof}

\bibliographystyle{myamsalpha}
\bibliography{Ref}

\end{document}